\documentclass[graybox,envcountsect,envcountsame]{svmult}


\usepackage{mathptmx}       
\usepackage{helvet}         
\usepackage{courier}        
\usepackage{type1cm}        
%
\usepackage{makeidx}         
\usepackage{graphicx}        
\usepackage{multicol}        
\usepackage[bottom]{footmisc}

\usepackage{amsmath}
\usepackage{amssymb}

\DeclareMathOperator{\M}{M}
\DeclareMathOperator{\A}{I}
\newcommand{\mc}{\mathcal}
\DeclareMathOperator{\Sub}{Sub}
\newcommand{\ul}{\underline}
\usepackage{tikz}

\tikzstyle{dot}=[circle, draw=black!70, fill=black!20, inner sep=.25ex]
\tikzstyle{Dot}=[circle, draw=black, fill=black!70, inner sep=.25ex]
\tikzstyle{line}=[-, draw=black!50, line width=.5pt]
\tikzstyle{thinline}=[-, draw=black!20, line width=.5pt]
\tikzstyle{Line}=[-, draw=black, line width=1pt]


\makeindex             


\usepackage{enumerate}

\begin{document}

\title*{A topos view of the type-2 fuzzy truth value algebra}
\author{John Harding and Carol Walker}
\institute{ \at Dept. of Mathematical Sciences, New Mexico State University, Las Cruces NM 88003, \email{jharding@nmsu.edu, hardy@nmsu.edu}}

\maketitle

\begin{center} {\em Dedicated to Elbert Walker} \end{center}

\abstract*{repeat abstract text here}

\abstract{It is known \cite{Hohle1,Hohle2} that fuzzy set theory can be viewed as taking place within a topos. There are several equivalent ways to construct this topos, one is as the topos of \'{e}tal\'{e} spaces over the topological space $Y=[0,1)$ with lower topology. In this topos, the fuzzy subsets of a set $X$ are the subobjects of the constant \'{e}tal\'{e} $X\times Y$ where $X$ has the discrete topology. Here we show that the type-2 fuzzy truth value algebra \cite{HWW,Zadeh1,Zadeh2} is isomorphic to the complex algebra formed from the subobjects of the constant relational \'{e}tal\'{e} given by the type-1 fuzzy truth value algebra $\mathfrak{I}=([0,1],\wedge,\vee,\neg,0,1)$. More generally, we show that if $L$ is the lattice of open sets of a topological space $Y$ and $\mathfrak{X}$ is a relational structure, then the convolution algebra $L^\mathfrak{X}$ \cite{convolution} is isomorphic to the complex algebra formed from the subobjects of the constant relational \'{e}tal\'{e} given by  $\mathfrak{X}$ in the topos of \'{e}tal\'{e} spaces over $Y$. 
\keywords{Fuzzy sets, type-2 fuzzy sets, topos, complex algebra, convolution \mbox{algebra}, relational structure.\bigskip \\
\textbf{MSC:} 03E72, 03B52, 18B25, 06E25.}}


\section{Introduction}

The type-2 fuzzy truth value algebra $\M$ was introduced by Zadeh \cite{Zadeh1,Zadeh2}. Using $\A$ for the real unit interval $[0,1]$, the underlying set of $\M$ is the set $\A^{\A}$ of all functions $\alpha:\A\to\A$. This algebra is equipped with binary operations $\sqcap,\sqcup$, a unary operation $\neg$, and constants $\underline{0},\underline{1}$. For instance, $\alpha_1\sqcup\alpha_2$ is the function from $\A$ to itself given by 
\[ (\alpha_1\sqcup\alpha_2)(x)=\bigvee\{\alpha_1(y)\wedge\alpha_2(z)\,\mid\, y\vee z = x\}\]
These operations are {\em convolutions} of the corresponding operations $\wedge,\vee,\neg,0,1$ on $\A$, much like polynomial multiplication is a convolution of multiplication. The \mbox{type-2} fuzzy truth value algebra has been investigated by a number of authors. A thorough account is in the monograph \cite{HWW}. 

The construction of $\M$ is an instance of the more general notion of a {\em convolution algebra}. Let $X$ be a set, $R\subseteq X^{n+1}$ be an $n+1$-ary relation on $X$, and $L$ be a complete lattice. Then there is an $n$-ary operation $f$ on the collection $L^X$ of maps $\alpha:X\to L$ given by 
\[ f(\alpha_1,\ldots,\alpha_n)(x)=\bigvee\{\alpha_1(x_1)\wedge\cdots\wedge\alpha_n(x_n)\,\mid\, (x_1,\ldots,x_n,x)\in R\}\]
A {\em relational structure} $\mathfrak{X}$ is a set with a family of relations on it. For a complete lattice $L$, the convolution algebra $L^\mathfrak{X}$ is the algebra obtained by using each $n+1$-ary relation of $\mathfrak{X}$ to produce an $n$-ary operation on $L^X$. Convolution algebras were introduced, and their properties developed, in \cite{convolution}. The type-2 truth value algebra $\M$ is seen as an instance of this considering $\A$ as a relational structure with $\vee$ being the ternary relation $\{(x,y,z)\,\mid\, x\vee y = z\}$ and so forth. 

For $2$ the two-element lattice, the special case of the convolution algebra $2^\mathfrak{X}$ is a well-known object studied in some form for over 100 years. Viewing the elements of $2^X$ as corresponding to subsets $A$ of $X$, the additional operations of $2^\mathfrak{X}$ are given simply by relational image 
\[R[A_1,\ldots,A_n]=\{x\,\mid\, (x_1,\ldots,x_n,x)\in R\mbox{ for some }x_1\in A_1,\ldots,x_n\in A_n\}\] 
The power set $\mc{P}(X)$ equipped with these operations given by relational image is known as the {\em complex algebra} $\mathfrak{X}^+$ of the relational structure $\mathfrak{X}$. Complex algebras play a key role in algebraic treatments of modal logic \cite{Goldblatt}. The best known example is the complex algebra of a group $\mathfrak{G}$ which consists of a binary operation of multiplication on the subsets of $G$ given by $A\cdot B =\{ab\,\mid\, a\in A,b\in B\}$. 

Connections between the type-2 fuzzy truth value algebra $\M$ and the complex algebra of the relational structure $\mathfrak{I}=(\A,\wedge,\vee,\neg,0,1)$ were seen in \cite{HWW} where it was shown that $\M$ and $\mathfrak{I}^+$ satisfy the same equations. This was extended in \cite{convolution} where it was shown that for any relational structure $\mathfrak{X}$ and any non-trivial complete Heyting algebra $L$, that $L^\mathfrak{X}$ and $\mathfrak{X}^+$ satisfy the same equations. The purpose of this note is to further extend the connection between convolution algebras and complex algebras via the use of topos theory. 

An \'{e}tal\'{e} space over a topological space $Y$ is a topological space $E$ together with a local homeomorphism $\pi:E\to Y$. With suitable morphisms, the category of \'{e}tal\'{e} spaces over $Y$ forms a topos \cite{GoldblattTopos,Johnstone,Maclean}. While fuzzy sets do not form a topos, it is known \cite{Hohle1,Hohle2} that they can be embedded into the topos of \'{e}tal\'{e} spaces over the space $Y=[0,1)$ with the lower topology. Under this embedding, fuzzy subsets of a set $X$ correspond to the collection $\Sub(\hat{X})$ of subobjects of the constant \'{e}tal\'{e} $\hat{X}$.

A topos has finite products, so an \'{e}tal\'{e} space has finite powers. An $n$-ary relation $R$ on an \'{e}tal\'{e} space $E$ is a subobject of the power $E^n$. An $n+1$-ary relation on $E$ gives an $n$-ary operation on the collection $\Sub(E)$ of its subobjects. A {\em relational \'{e}tal\'{e}} $\mathcal{E}$ is an \'{e}tal\'{e} space $E$ with a family of relations on it. For a relational \'{e}tal\'{e} $\mathcal{E}$, we define its {\em complex algebra} $\mathcal{E}^+$ to be the collection $\Sub(E)$ of its subobjects, together with the family of operations induced by the relations of $\mathcal{E}$. Our main result is as follows. 
\vspace{2ex}

\noindent {\bf Theorem. } Let $\mathfrak{X}$ be a relational structure and $Y$ be a topological space with lattice of open sets $L$. Then the convolution algebra $L^{\mathfrak{X}}$ is isomorphic to the complex algebra $\hat{\mathfrak{X}}^+$ of the constant relational \'{e}tal\'{e} given by $\mathfrak{X}$. 
\vspace{2ex}

\noindent {\bf Corollary. } Let $Y=[0,1)$ with the lower topology and $\mathfrak{I}=(\A,\wedge,\vee,\neg,0,1)$. Then the type-2 fuzzy truth value algebra $\M$ is isomorphic to the complex algebra $\hat{\mathfrak{I}}^+$ of the constant relational \'{e}tal\'{e} for $\mathfrak{I}$ in the topos of \'{e}tal\'{e} spaces over $Y$. 
\vspace{2ex}

Each topos has an internal language and logic. Objects in a topos have an external view, as described above, as well as an {\em internal} view. For example, a relational \'{e}tal\'{e} is internally simply a relational structure in the topos of \'{e}tal\'{e} spaces. 

In a topos, for each object $E$ there is an object $\mc{P}(E)$ called its {\em power object} that is internally the power set of $E$. A direct external description of $\mc{P}(E)$ is difficult. A more direct link to the external world is provided by the {\em global sections} of $\mc{P}(E)$ which are morphisms $1\to\mc{P}(E)$ where $1$ is the terminal object of the topos. The set of global elements of $\mc{P}(E)$ is in bijective correspondence to the set $\Sub(E)$ of subobjects of $E$. 
For a relational \'{e}tal\'{e} $\mc{E}$, each of its $n+1$-ary relations $R$ provides a morphism $f_R:\mc{P}(E)^n\to\mc{P}(E)$. With these morphisms, $\mc{P}(E)$ becomes the internal complex algebra of $\mc{E}$. Further, these morphisms $f_R$ induce $n$-ary operations on the set of global sections of $\mc{P}(E)$. 
\vspace{2ex}

\noindent {\bf Theorem. } 
Let $\mathfrak{X}$ be a relational structure and $Y$ be a topological space with lattice of open sets $L$. The convolution algebra $L^\mathfrak{X}$ is isomorphic to the algebra of global sections of the internal complex algebra of the relational structure $\hat{\mathfrak X}$ in the topos of \'{e}tal\'{e} spaces over $Y$. 
\vspace{2ex}

\noindent {\bf Corollary. } 
The type-2 fuzzy truth value algebra $\M$ is isomorphic to the algebra of global sections of the internal complex algebra of the constant relational \'{e}tal\'{e} $\hat{\mathfrak{I}}$ in the topos of \'{e}tal\'{e} spaces over $Y=[0,1)$ with the lower topology. 
\vspace{2ex}

The authors of this note worked with Elbert Walker for a period of over a decade on the type-2 fuzzy truth value algebra and convolution algebra. We are pleased to be able to contribute this note to further progress on these subjects. 

\section{The type-2 fuzzy truth value algebra, and convolution algebras}

Let $\A$ be the unit interval $[0,1]$ of the reals. We consider the power 
\[ \A^{\A} = \{\alpha\mid \alpha:\A\to \A\}\]
We consider $\A$ to be a lattice, and let $\neg$ be the negation on $\A$ given by $\neg x=1-x$. An object central to the study of type-2 fuzzy sets \cite{HWW} is the type-2 fuzzy truth value algebra. This was defined by Zadeh \cite{Zadeh1,Zadeh2} as follows. 

\begin{definition}
\label{type2}
The type-2 fuzzy truth value algebra $\M=(\A^{\A},\sqcap,\sqcup,*,\ul{0},\ul{1})$ has its operations defined as follows. 
\begin{align*}
(\alpha\sqcap\beta)(x) &= \bigvee\{\alpha(y)\wedge\beta(z)\mid y\wedge z = x\}\\
(\alpha\sqcup\beta)(x) &= \bigvee\{\alpha(y)\wedge\beta(z)\mid y\vee z = x\}\\
\neg \alpha(x) &= \bigvee\{\alpha(y)\mid \neg y= x\}\\
\ul0(x) &= \begin{cases} 1 \mbox{ if } x=0\\ 0 \mbox{ else } \end{cases} \\ 
\ul1(x) &= \begin{cases} 1 \mbox{ if } x=1\\ 0 \mbox{ else } \end{cases}
\end{align*}
\end{definition}

These operations are defined as {\em convolutions} of the operations $\wedge,\vee,\neg,0,1$ of the unit interval. In \cite{convolution} this process was extended to what is called the convolution algebra of a relational structure over a complete lattice. Before describing this, we note that the binary operations $\wedge,\vee:\A^2\to\A$ can be regarded as ternary relations on $\A$, where for instance 
\[ \wedge =\{(x,y,z)\mid x\wedge y = z\}\]
Similarly, the unary operation $\neg$ on $\A$ can be viewed as a binary relation on $\A$, and the constants $0,1$ of $\A$ can be viewed as unary relations on $\A$. 

\begin{definition}
An indexing set $J$ is simply a set. A type $\tau$ is a function $\tau:J\to\mathbb{N}$ from an indexing set to the natural numbers. We will usually write $\tau(j)$ as $n_j$ when the type $\tau$ is understood from context. 
\end{definition}

\begin{definition}
 An algebra $\mc{A}=(A,(f_j)_J)$ of type $\tau$ consists of a set $A$ and for each $j\in J$ an $n_j$-ary operation $f_j:A^{n_j}\to A$. A relational structure $\mathfrak{X}=(X,(R_j)_J)$ of type~$\tau$ consists of a set $X$ and for each $j\in J$ an $n_j+1$-ary relation $R_j\subseteq X^{n_j+1}$. 
\end{definition}

We may regard $\mathfrak{I}=(\A,\wedge,\vee,\neg,0,1)$ as an algebra of type $2,2,1,0,0$ with two binary operations, a unary operation, and two nullary operations. Also, considering an $n$-ary operation as an $n+1$-ary relation as discussed above, we can consider $\mathbb{I}$ as a relational structure of the same type $2,2,1,0,0$ with two ternary relations, one binary relation, and two unary relations. 

\begin{definition}
\label{aa}
For $L$ a complete lattice and $\mathfrak{X}=(X,(R_j)_J)$ a relational structure of type $\tau$, for each $j\in J$ define an $n_j$-ary operation on $L^X$ by setting \vspace{-1ex}

\begin{align*}
f_{j}(\alpha_1,\ldots,\alpha_{n_j})(x)&=\bigvee\{\alpha_1(x_1)\wedge\cdots\wedge\alpha_{n_j}(x_{n_j})\mid (x_1,\ldots,x_{n_j},x)\in R_j\}
\end{align*}
\vspace{-1ex}

\noindent The convolution algebra is the algebra $L^{\mathfrak{X}}=(L^X,(f_j)_J)$ of type $\tau$.
\end{definition}

The type-2 fuzzy truth value algebra $\M$ is an example of a convolution algebra. Here $\M=\A^{\mathfrak{X}}$ where $\A$ is the unit interval considered as a complete lattice and $\mathfrak{X}$ is the relational structure $(\A,\wedge,\vee,\neg,0,1)$. Another example of convolution algebras comes from the notion of a complex algebra of a relational structure. If $\mathcal{G}=(G,+,-,0)$ is a group, then considering $\mathcal{G}$ as a relational structure of type 2, 1, 0, its complex algebra is the collection $\mc{P}(G)$ of all subsets of $G$ with operations $A+B$, $-A$ and constant $\ul{0}$ defined in the obvious way. Complex algebras play an important role in modal logic. The general definition is as follows. 

\begin{definition}
For a relational structure $\mathfrak{X}=(X,(R_j)_J)$ of type $\tau$, define for each $j\in J$ an $n_j$-ary operation $h_j$ on the power set $\mathcal{P}(X)$ by 

\[ h_j(A_1,\ldots,A_n)=\{a\mid \mbox{ there are $a_1\in A_1,\ldots,a_n\in A_n$ with }(a_1,\ldots,a_n,a)\in R_j\}\]
\vspace{-1ex}

\noindent The complex algebra is the algebra $\mathfrak{X}^+=(\mathcal{P}(X),(h_j)_J)$ of type $\tau$. 
\end{definition}

In \cite{convolution} a number of results were established about the connection between convolution algebras and complex algebras. For 2 the two-element lattice, we have the following from \cite{convolution}.

\begin{theorem}
For a relational structure $\mathfrak{X}$, the convolution algebra $2^\mathfrak{X}$ and  complex algebra $\mathfrak{X}^+$ are isomorphic. 
\end{theorem}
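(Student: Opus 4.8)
The plan is to exhibit an explicit isomorphism between the two underlying sets and then check that it respects every operation. Since $2=\{0,1\}$, a map $\alpha\colon X\to 2$ is determined by its support, so the assignment $\Phi\colon 2^X\to\mathcal{P}(X)$ given by $\Phi(\alpha)=\{x\in X\mid\alpha(x)=1\}$ is a bijection, with inverse sending a subset $A$ to its characteristic function $\chi_A$. It then remains only to show that for each $j\in J$ and all $\alpha_1,\ldots,\alpha_{n_j}\in 2^X$ one has $\Phi(f_j(\alpha_1,\ldots,\alpha_{n_j}))=h_j(\Phi(\alpha_1),\ldots,\Phi(\alpha_{n_j}))$, where $f_j$ is the convolution operation of Definition~\ref{aa} and $h_j$ is the corresponding complex-algebra operation.

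First I would record the elementary observation that in the two-element lattice a (possibly empty) join is $1$ if and only if at least one of its terms is $1$, and a (possibly empty) meet is $1$ if and only if every one of its terms is $1$; in particular the empty meet is $1$ and the empty join is $0$. Writing $A_i=\Phi(\alpha_i)$, the desired equality is then a direct translation of quantifiers: $x\in\Phi(f_j(\alpha_1,\ldots,\alpha_{n_j}))$ iff $f_j(\alpha_1,\ldots,\alpha_{n_j})(x)=1$, iff there is a tuple $(x_1,\ldots,x_{n_j},x)\in R_j$ with $\alpha_1(x_1)\wedge\cdots\wedge\alpha_{n_j}(x_{n_j})=1$, iff there is such a tuple with $\alpha_i(x_i)=1$ for every $i$, iff there are $x_1\in A_1,\ldots,x_{n_j}\in A_{n_j}$ with $(x_1,\ldots,x_{n_j},x)\in R_j$, iff $x\in h_j(A_1,\ldots,A_{n_j})$. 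Hence $\Phi$ is a homomorphism, and being a bijection it is an isomorphism.

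The only point worth isolating is the nullary case $n_j=0$, where $R_j$ is simply a subset of $X$. Here $f_j()(x)$ is the join of the empty meet taken over those singleton tuples $(x)\in R_j$, which equals $1$ exactly when $x\in R_j$; thus $\Phi(f_j())=R_j$, while $h_j()=\{a\mid (a)\in R_j\}=R_j$ as well, so the two constants coincide. I do not expect any genuine obstacle in this argument: its whole content is that the lattice operations of $2$ are the logical connectives ``or'' and ``and'', so that the convolution formula of Definition~\ref{aa} literally computes the relational-image formula defining the complex algebra $\mathfrak{X}^+$.
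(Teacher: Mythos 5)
Your proposal is correct: the paper itself states this theorem without proof, citing \cite{convolution}, and your argument --- the support bijection $\Phi(\alpha)=\{x\mid\alpha(x)=1\}$ together with the observation that joins and meets in the two-element lattice are exactly disjunction and conjunction, so the convolution formula of Definition~\ref{aa} becomes the relational-image formula --- is precisely the standard argument, with the nullary case handled properly as well. Nothing further is needed.
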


For the next result, we recall that a complete Heyting algebra is a complete lattice that satisfies the infinite distributive law $x\wedge\bigvee_Iy_i = \bigvee_I x\wedge y_i$. Examples of complete Heyting algebras include the real unit interval $\A$ and the collection $\mc{O}(X)$ of open sets of any topological space $X$. 

\begin{theorem} \cite[theorem.~36]{convolution} 
Let $L$ be a complete Heyting algebra with at least two elements and $\mathfrak{X}$ be a relational structure. Then the convolution algebra $L^\mathfrak{X}$ and complex algebra $\mathfrak{X}^+$ satisfy the same equations.
\end{theorem}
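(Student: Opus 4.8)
\noindent The plan is to prove the two inclusions of equational theories separately. For ``every equation of $L^{\mathfrak X}$ holds in $\mathfrak X^+$'' I would use the hypothesis that $L$ has at least two elements, so $\{0,1\}\subseteq L$ is a genuine two-element subset, where $0,1$ denote the bottom and top of $L$. This subset is closed under all the convolution operations of $L^{\mathfrak X}$: a meet $\alpha_1(x_1)\wedge\cdots\wedge\alpha_{n_j}(x_{n_j})$ of elements of $\{0,1\}$ again lies in $\{0,1\}$, and an arbitrary join in $L$ of a subset of $\{0,1\}$ is $1$ if some member is $1$ and is $0$ otherwise. Hence $\{0,1\}^X$ with the restricted operations is a subalgebra of $L^{\mathfrak X}$, and comparing the definitions it coincides with the convolution algebra $2^{\mathfrak X}$, which by the preceding theorem is isomorphic to $\mathfrak X^+$. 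Since a subalgebra satisfies every equation of the ambient algebra, this inclusion follows.

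\noindent The reverse inclusion, ``every equation of $\mathfrak X^+$ holds in $L^{\mathfrak X}$'', is the substantive part, and it is here that the frame distributive law $a\wedge\bigvee_I b_i=\bigvee_I(a\wedge b_i)$ of the complete Heyting algebra $L$ is used. First I would set up a common combinatorial description of term functions in both algebras. Given a term $t(v_1,\dots,v_k)$, consider its syntax tree, with internal nodes labeled by operation symbols $f_j$ (hence by the relations $R_j$) and leaves labeled by variables. Call an assignment $\ell$ of elements of $X$ to the nodes \emph{admissible at} $x$ if $\ell$ sends the root to $x$ and at each internal node with symbol $f_j$, children values $x_1,\dots,x_{n_j}$ and own value $x_0$ one has $(x_1,\dots,x_{n_j},x_0)\in R_j$; write $m_t(\ell)=\bigwedge\{\alpha_{i(p)}(\ell(p)) : p\ \text{a leaf labeled}\ v_{i(p)}\}$, with an empty meet read as the top of $L$. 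An induction on $t$, using the frame law at each internal node to push the joins coming from the subterms past the finite meet, should give
\[ t^{L^{\mathfrak X}}(\alpha_1,\dots,\alpha_k)(x)=\bigvee\{\,m_t(\ell) \mid \ell\ \text{admissible at}\ x\,\}, \]
while unwinding the definition of relational image directly gives
\[ x\in t^{\mathfrak X^+}(A_1,\dots,A_k)\iff \exists\,\ell\ \text{admissible at}\ x\ \text{with}\ \ell(p)\in A_{i(p)}\ \text{for every leaf}\ p. \]

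\noindent With these two formulas available, suppose $s\approx t$ holds in $\mathfrak X^+$. By symmetry it suffices to prove $s^{L^{\mathfrak X}}(\alpha_1,\dots,\alpha_k)\le t^{L^{\mathfrak X}}(\alpha_1,\dots,\alpha_k)$ pointwise, and by the first displayed formula it is enough to fix an assignment $\ell$ admissible at $x$ for $s$ and show $m_s(\ell)\le t^{L^{\mathfrak X}}(\alpha_1,\dots,\alpha_k)(x)$. Write $c=m_s(\ell)$, and define $A_i\subseteq X$ to be $\{\ell(p) : p\ \text{an}\ s\text{-leaf labeled}\ v_i\}$ when $v_i$ occurs in $s$ and $\{a\in X : c\le\alpha_i(a)\}$ otherwise. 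Then $\ell$ witnesses $x\in s^{\mathfrak X^+}(A_1,\dots,A_k)$, so $x\in t^{\mathfrak X^+}(A_1,\dots,A_k)$, which produces an $\ell'$ admissible at $x$ for $t$ with $\ell'(q)\in A_{i(q)}$ for every leaf $q$; by the definition of $A_{i(q)}$ in either case $c\le\alpha_{i(q)}(\ell'(q))$, so $c\le m_t(\ell')\le t^{L^{\mathfrak X}}(\alpha_1,\dots,\alpha_k)(x)$, as required. Running the same argument with $s$ and $t$ interchanged yields the opposite inequality, hence equality.

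\noindent The hard part will be the term-tree expansion formula: organizing the induction and the iterated use of the frame law cleanly (finite meets distributing over arbitrary joins), and handling the degenerate cases correctly --- nullary operation symbols, whose syntax tree is a single node with no leaves and which contribute the top of $L$, and, above all, terms $s$ and $t$ with different sets of variables, which is precisely what the second clause in the definition of $A_i$ is designed to absorb.
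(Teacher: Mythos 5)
Your proposal is correct, but note that the paper you are commenting on does not prove this statement at all: it is quoted verbatim from the convolution-algebra paper \cite{convolution}, so there is no in-text argument to compare against. On its own merits your two-step plan works. The easy direction is exactly as you say: $\{0,1\}\subseteq L$ is closed under the convolution operations (including the nullary ones, where the empty meet gives $1$ and an empty join gives $0$), joins and finite meets of $\{0,1\}$ computed in $L$ agree with those computed in $2$, so $\{0,1\}^X$ is a subalgebra of $L^{\mathfrak X}$ of type $\tau$ isomorphic to $2^{\mathfrak X}\cong\mathfrak X^+$, and equations pass to subalgebras; the hypothesis $|L|\geq 2$ is used precisely to make $0\neq 1$. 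For the substantive direction, your term-tree expansion
$t^{L^{\mathfrak X}}(\alpha_1,\dots,\alpha_k)(x)=\bigvee\{m_t(\ell)\mid \ell\text{ admissible at }x\}$
is exactly where the frame law enters, and the induction you sketch does go through (at a node labelled $f_j$ one distributes the finite meet of the subterm values over the joins supplied by the induction hypothesis, then merges with the outer join over tuples in $R_j$); the degenerate cases you flag (single-variable terms, nullary symbols with empty meet/empty join) are handled by your conventions. The one genuinely clever point, the definition of $A_i$ as the set of $\ell$-values of the $s$-leaves labelled $v_i$ when $v_i$ occurs in $s$ and as the threshold set $\{a\mid c\leq\alpha_i(a)\}$ otherwise, is sound: in either case membership of $\ell'(q)$ in $A_{i(q)}$ yields $c\leq\alpha_{i(q)}(\ell'(q))$, so $c\leq m_t(\ell')\leq t^{L^{\mathfrak X}}(\bar\alpha)(x)$, and joining over admissible $\ell$ and invoking symmetry finishes the argument. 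This is essentially the natural join-of-meets normal-form proof one would expect for the cited theorem; what your version makes explicit is that only the join-infinite distributive law of $L$ is needed, together with the subalgebra trick for the converse inclusion.
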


These results point to a link between convolution algebras and complex algebras. To further develop this link, we turn to the subject of topos theory.  Our intent is to provide an account of this that does not require prior expertise in topos theory. We next provide the necessary background for this. 

\section{Topos theory}\label{bn}

Here we provide the basics of the portion of topos theory needed for our results. The subject of topos theory is very substantial. For a wider view of the subject, see \cite{GoldblattTopos,Johnstone,Maclean}. 

\begin{definition}
For a topological space $Y$, let $\mc{O}(Y)$ be the lattice of open sets of $Y$. 
\end{definition}

It is well known that $\mc{O}(Y)$ is a complete Heyting algebra. Arbitrary joins are given by unions, finite meets by intersections, negation $\neg A$ is the interior of the complement of $A$, and implication $A\to B$ is the largest open set whose intersection with $A$ is contained in $B$. 

To outline coming ideas, it is helpful to provide a toy example. For this, we will consider the 3-element topological space $Y=\{a,b,c\}$ where open sets are $\emptyset$, $\{b\}$, $\{a,b\}$, $\{b,c\}$, $\{a,b,c\}$. The situation is shown in Figure~\ref{fig1}. 

\begin{figure}[h]
\begin{tikzpicture}[scale=.8]
\draw [thick] (0,0) ellipse (2.3cm and .5cm);
\draw [thick] (-.75,0) ellipse (1.2cm and .3cm);
\draw [thick] (.75,0) ellipse (1.2cm and .3cm);
\node[Dot] (a) at (-1.5,0) {};
\node[Dot] (b) at (0,0) {};
\node[Dot] (c) at (1.5,0) {};
\node at (-1.5,-1) {$a$};  \node at (0,-1) {$b$};  \node at (1.5,-1) {$c$};

\node[Dot] (d) at (6,-1.25) {}; 
\node[Dot] (e) at (6,-.5) {};
\node[Dot] (f) at (5.25,.25) {};
\node[Dot] (g) at (6.75,.25) {};
\node[Dot] (h) at (6,1) {};
\draw[thick] (d)--(e)--(g)--(h)--(f)--(e);
\end{tikzpicture}
\caption{The topological space $Y$ and its lattice $\mc{O}(Y)$ of open sets}
\label{fig1}
\end{figure}
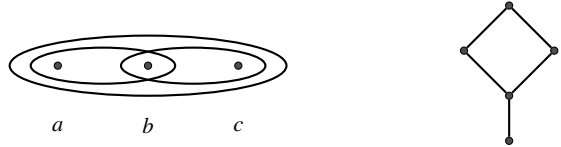

There are a number of equivalent ways to construct a topos from a topological space $Y$, the topos of sheaves over $Y$, the topos of \'{e}tal\'{e} spaces over $Y$, and the topos of $\mc{O}(Y)$-valued sets. These are described in \cite{Hohle1} in detail. Here we shall use the \'{e}tal\'{e} spaces over $Y$ as they are the most concrete and require the least categorical machinery. We give our notation for this in the following, then explain the details. 

\begin{definition}
For $Y$ a topological space, $\mc{E}(Y)$ is the topos of \'{e}tal\'{e} spaces over $Y$. 
\end{definition}

An \'{e}tal\'{e} space over $Y$ is a local homeomorphism $\pi:E\to Y$. This means that $E$ is a topological space and for each $e\in E$ there is an open neighborhood $U$ of $e$ so that the image $\pi(U)$ is open in $Y$ and the restriction $\pi|\,U$ maps $U$ homeomorphically onto its image. When convenient, we denote the \'{e}tal\'{e} space $(E,\pi)$. It is customary to use terminology for this situation. We view an \'{e}tal\'{e} space as a bundle over $Y$ and for each $y\in Y$ call $\pi^{-1}(y)$ the stalk, or fiber, over $y$ and denote this $E_y$. Since $\pi$ is a local homeomorphism, it is clear that the topology of $E$ restricts to the discrete topology on each stalk. 

\begin{definition} 
If $\pi:E\to Y$ and $\pi':E'\to Y$ are \'{e}tal\'{e} space over $Y$, a morphism between them is a continuous map $f:E\to E'$ such that $\pi'\circ f = \pi$.
\end{definition}

A morphism $f:E\to E'$ of \'{e}tal\'{e} spaces takes the stalk $E_y$ over $y$ of one space to the stalk $E'_y$ over $y$ of the other. So a morphism of \'{e}tal\'{e} spaces is a fiberwise morphism that is compatible with the topology. The situation is shown in Figure~\ref{fig2}. With composition of \'{e}tal\'{e} morphisms being usual composition of maps, the category $\mc{E}(Y)$ of \'{e}tal\'{e} spaces over the topological space $Y$ is defined. It is known \cite{GoldblattTopos} that this category forms a topos. Monomorphisms in $\mc{E}(Y)$ are morphisms that are one-one \cite{GoldblattTopos}, or equivalently, are one-one fiberwise.   

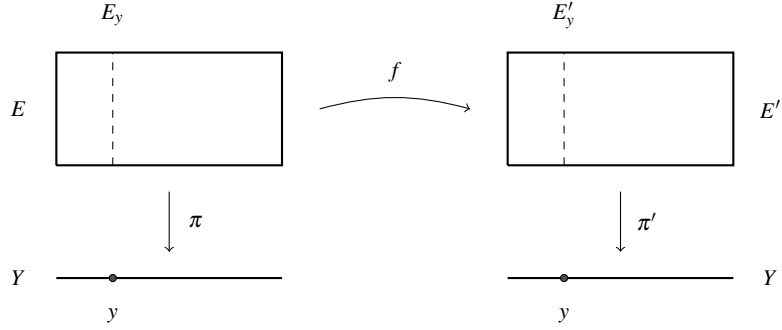
\begin{figure} [h]
\begin{tikzpicture}
\draw[thick] (-1.5,0)--(1.5,0);
\draw[thick] (-1.5,1.5)--(1.5,1.5)--(1.5,3)--(-1.5,3)--(-1.5,1.5);
\draw[thick] (4.5,0)--(7.5,0);
\draw[thick] (4.5,1.5)--(7.5,1.5)--(7.5,3)--(4.5,3)--(4.5,1.5);
\draw[dashed] (-.75,1.5)--(-.75,3);
\draw[dashed] (5.25,1.5)--(5.25,3);
\node[Dot] at (-.75,0) {};
\node[Dot] at (5.25,0) {};
\draw[->] (2,2.25) to [out=15,in=165] (4,2.25);
\draw[->] (0,1.15)--(0,.35);
\draw[->] (6,1.15)--(6,.35);
\node at (-.75,-.5) {$y$};
\node at (5.25,-.5) {$y$};
\node at (-.75,3.5) {$E_y$};
\node at (5.25,3.5) {$E'_y$};
\node at (.35,.75) {$\pi$};
\node at (6.35,.75) {$\pi'$};
\node at (-2,0) {$Y$};
\node at (-2,2.25) {$E$};
\node at (8,0) {$Y$};
\node at (8,2.25) {$E'$};
\node at (3,2.75) {$f$};

\end{tikzpicture}
\caption{A morphism of \'{e}tal\'{e} spaces}
\label{fig2}
\end{figure}

\pagebreak[3]
For an object $A$ in any category, we quasi-order the monomorphisms $e:B\to A$ into $A$ as follows. If $e:B\to A$ and $e':B'\to A$ are two monomorphisms, set $e'\leq e$ if there is a morphism $u:B'\to B$ with $e\circ u = e'$. Let $\equiv$ be the equivalence relation and $\subseteq$ be the partial ordering associated with the quasi-order $\leq$. Basic properties of monomorphisms show that two monomorphisms are equivalent under $\equiv$ if and only if there is an isomorphism $u:B'\to B$ with $e\circ u = e'$.


\begin{definition}
For $A$ an object in a category $\mc{C}$, let $\Sub(A)$ be the collection of equivalence classes of monomorphisms $e:B\to A$ with codomain $A$, partially ordered by $\subseteq$. 
\end{definition}

In any category, $\Sub(A)$ is a poset (modulo cardinality considerations), and in any topos it is a Heyting algebra \cite{Maclean}. In the special case of a topos of \'{e}tal\'{e} spaces, we provide a  proof of this result since it is illustrative of the basic ideas. The key ingredient of the proof is in \cite[Lem.~C.1.3.2(iii)]{Elephant}. 

\begin{proposition}
\label{io}
Let $Y$ be a topological space and $(E,\pi)$ be an \'{e}tal\'{e} space over $Y$. Then $\Sub(E,\pi)$ is isomorphic to the collection $\mc{O}(E)$ of open subsets of $E$, hence is a complete Heyting algebra.  
\end{proposition}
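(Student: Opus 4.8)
The plan is to exhibit mutually inverse, order-preserving maps between $\mc{O}(E)$ and $\Sub(E,\pi)$. In one direction, given an open set $U\subseteq E$, I first observe that the restriction $\pi|\,U:U\to Y$ is again a local homeomorphism, since an open subspace of an \'{e}tal\'{e} space is itself \'{e}tal\'{e} over $Y$; hence the inclusion $\iota_U:U\hookrightarrow E$ is a morphism in $\mc{E}(Y)$ (it is continuous and commutes with the projections) and it is injective, so it is a monomorphism. This gives the assignment $U\mapsto[\iota_U]$. Conversely, to a monomorphism $m:(B,\pi_B)\to(E,\pi)$ I associate its image $m(B)\subseteq E$, and the task is to see this is open and that $m$ restricts to an isomorphism $B\cong m(B)$ in $\mc{E}(Y)$.

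The crux is this last point, and it rests on the cited ingredient \cite[Lem.~C.1.3.2(iii)]{Elephant}: every morphism of \'{e}tal\'{e} spaces over $Y$ is itself a local homeomorphism. Granting this, $m$ is in particular an open map, so $U:=m(B)$ is open in $E$; and since monomorphisms in $\mc{E}(Y)$ are exactly the injective morphisms, $m$ is a continuous open bijection onto $U$, hence a homeomorphism, and it commutes with the projections, so $m:B\to U$ is an isomorphism of \'{e}tal\'{e} spaces with $\iota_U\circ m=m$. Thus every monomorphism into $E$ is $\equiv$-equivalent to some $\iota_U$, and the assignments $U\mapsto[\iota_U]$ and $[m]\mapsto m(B)$ are mutually inverse: injectivity of $m$ recovers $B$ up to isomorphism, and $\iota_U$ recovers $U$ on the nose.

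It then remains to match the orderings. For open $U,V\subseteq E$ I will check that $[\iota_U]\subseteq[\iota_V]$ iff there is a morphism $u$ over $Y$ with $\iota_V\circ u=\iota_U$, iff $U\subseteq V$: any such $u$ must be the set-theoretic inclusion, forcing $U\subseteq V$, and conversely if $U\subseteq V$ the inclusion $U\hookrightarrow V$ is continuous and commutes with the projections, witnessing $[\iota_U]\subseteq[\iota_V]$. Hence the bijection above is an isomorphism of posets $\Sub(E,\pi)\cong\mc{O}(E)$, and since the lattice of open sets of any topological space is a complete Heyting algebra, so is $\Sub(E,\pi)$. I expect the only genuine obstacle to be the openness-of-the-image step, i.e.\ feeding in that \'{e}tal\'{e} morphisms are local homeomorphisms; the remaining ingredients — that open subspaces of \'{e}tal\'{e} spaces are \'{e}tal\'{e}, that a continuous open bijection is a homeomorphism, and the bookkeeping with the quasi-order $\leq$ — are routine.
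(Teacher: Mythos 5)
Your proposal is correct and follows essentially the same route as the paper: both directions hinge on the cited fact \cite[Lem.~C.1.3.2(iii)]{Elephant} that an \'{e}tal\'{e} morphism is a local homeomorphism (hence open), so the image of a monomorphism is an open set and the mono factors as an isomorphism followed by an inclusion, while inclusions of open subsets give the inverse assignment. Your explicit check of the order-correspondence just spells out what the paper dismisses as "easily seen," so there is nothing substantive to change.
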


\begin{proof}
Let $E'$ be an open subset of $E$. It is clear that $\pi\,|\,E'$ is a local homeomorphism and the identical embedding $\iota_{E'}:E'\to E$ is an \'{e}tal\'{e} morphism. So the equivalence class $\iota_{E'}/\equiv$ is a subobject of $(E,\pi)$. If $E''$ is another open subset of $E$ and the identical embedding $\iota_{E''}$ belongs to this equivalence class, then there is an isomorphism $u:E''\to E'$ with $\iota_{E'}\circ u = \iota_{E''}$. Since $\iota_{E'}$ and $\iota_{E''}$ are identical embeddings, this implies that $E'$ equals $E''$. So $\iota(\,\cdot\,)/\equiv$ gives an embedding of $\mc{O}(E)$ into $\Sub(E,\pi)$ that is easily seen to be an order-embedding. 



To see this mapping is onto, suppose $\pi':F\to Y$ is a local homeomorphism and $e:F\to E$ is an \'{e}tal\'{e} monomorphism. By \cite[Lem.~C.1.3.2(iii)]{Elephant} a morphism of \'{e}tal\'{e} spaces is in fact a local homeomorphism, hence an open map. So $E'=e(F)$ is an open subset of $E$, so yields an \'{e}tal\'{e} space and monomorphism $\iota_{E'}:E'\to E$. Considering $e':F\to E'$ to be the set map $e$ but with codomain $E'$ we have $e'$ is an \'{e}tal\'{e} isomorphism, and $\iota_{E'}\circ e'=e$. Thus $\iota_{E'}$ belongs to the equivalence class of $e$. So each subobject of $E$ contains a member $\iota_{E'}$ given by an open subset of $E$. 
\end{proof}

We turn next to special \'{e}tal\'{e} spaces that will play a prominent role for us. 

\begin{definition}
For a set $X$, let $\hat{X}$ be the topological product $X\times Y$ of $X$ with the discrete topology and $Y$. Then the projection onto the second component $\pi:\hat{X}\to Y$ is a local homeomorphism. We call $\hat{X}$ the constant \'{e}tal\'{e} space for $X$. 
\end{definition}

\begin{figure}[h]
\begin{tikzpicture}[scale=.58]
\draw [thick] (0,0) ellipse (2.3cm and .5cm);
\draw [thick] (-.75,0) ellipse (1.2cm and .3cm);
\draw [thick] (.75,0) ellipse (1.2cm and .3cm);
\node[Dot] (a) at (-1.5,0) {};
\node[Dot] (b) at (0,0) {};
\node[Dot] (c) at (1.5,0) {};
\node at (-1.5,-1) {$a$};  \node at (0,-1) {$b$};  \node at (1.5,-1) {$c$};

\draw [thick] (0,4.5) ellipse (2.3cm and .5cm);
\draw [thick] (-.75,4.5) ellipse (1.2cm and .3cm);
\draw [thick] (.75,4.5) ellipse (1.2cm and .3cm);
\node[Dot] (a) at (-1.5,4.5) {};
\node[Dot] (b) at (0,4.5) {};
\node[Dot] (c) at (1.5,4.5) {};

\draw [thick] (0,3) ellipse (2.3cm and .5cm);
\draw [thick] (-.75,3) ellipse (1.2cm and .3cm);
\draw [thick] (.75,3) ellipse (1.2cm and .3cm);
\node[Dot] (a) at (-1.5,3) {};
\node[Dot] (b) at (0,3) {};
\node[Dot] (c) at (1.5,3) {};

\draw[thick] (-2.5,2.25)--(2.5,2.25)--(2.5,5.25)--(-2.5,5.25)--(-2.5,2.25);

\node at (-4,3) {$\{p\}\times Y$};
\node at (-4,4.5) {$\{q\}\times Y$};
\node at (-3.1,0) {$Y$};

\end{tikzpicture}
\caption{The constant \'{e}tal\'{e} space for $X=\{p,q\}$ over the space $Y$ of Figure~\ref{fig1}}
\label{fig3}
\end{figure}
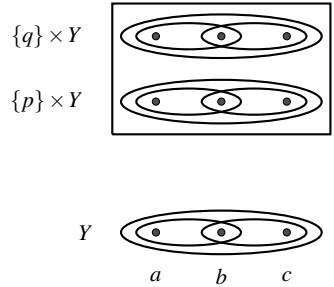

\pagebreak[3]

We consider the matter of subobjects of a constant \'{e}tal\'{e} space $\hat{X}$. In the following, $X\times Y$ is the product of $X$ with the discrete topology and $Y$, and $\pi:X\times Y\to Y$ is the projection onto the second coordinate. For each $x\in X$, we have that $\{x\}$ is open in $X$, so the cross section $\{x\}\times Y$ is open in $X\times Y$, and is homeomorphic to $Y$ in the subspace topology. For $E$ a subset of $X\times Y$, define for $x\in X$ the cross section $E^x$ by 
\[ E^x = (\{x\}\times Y)\cap E\]
Since $E$ is the union of its cross sections, it follows that $E$ is open if and only if the cross section $E^x$ is open for each $x\in X$. Since $\{x\}\times Y$ is homeomorphic to $Y$, the open sets that can occur as $E^x$ for given $x\in X$ are exactly the sets of the form $\{x\}\times A$ where $A$ is open in $Y$. This establishes the following. 

\begin{proposition}
\label{jk}
For a topological space $Y$ and set $X$, there is a bijection between the open subsets of the constant \'{e}tal\'{e} space $\hat{X}$ and indexed families $(A_x)_X$ of open subsets of $Y$. 
\end{proposition}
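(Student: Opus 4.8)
The plan is to exhibit the bijection explicitly and then check that it and its inverse are well defined, the remainder being bookkeeping. Write $\pi:\hat{X}=X\times Y\to Y$ for the projection, and for each $x\in X$ let $h_x:\{x\}\times Y\to Y$ denote the obvious homeomorphism $(x,y)\mapsto y$. Given an open set $E\subseteq\hat{X}$, I would send it to the family $(A_x)_X$ defined by $A_x=h_x(E^x)$, where $E^x=(\{x\}\times Y)\cap E$ is the cross section introduced above; conversely, given a family $(A_x)_X$ of open subsets of $Y$, I would send it to $\bigcup_{x\in X}(\{x\}\times A_x)$.

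First I would check that both assignments are well defined. For the first: since $E$ is open in $\hat{X}$ and $\{x\}\times Y$ is open in $\hat{X}$ (because $\{x\}$ is open in the discrete space $X$), the cross section $E^x$ is open in the subspace $\{x\}\times Y$, so $A_x=h_x(E^x)$ is open in $Y$ as $h_x$ is a homeomorphism. For the second: each $\{x\}\times A_x$ is open in the subspace $\{x\}\times Y$, hence open in $\hat{X}$, since $\{x\}\times Y$ is open in $\hat{X}$ and an open subset of an open subspace is open; therefore the union $\bigcup_{x\in X}(\{x\}\times A_x)$ is open in $\hat{X}$.

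Then I would verify that the two maps are mutually inverse. Starting from an open set $E$, the singletons cover $X$, so $E=\bigcup_{x\in X}E^x$, and $E^x=\{x\}\times A_x$ with $A_x=h_x(E^x)$, so reassembling recovers $E$. Starting from a family $(A_x)_X$, the cross section of $\bigcup_{x\in X}(\{x\}\times A_x)$ at a given $x$ is exactly $\{x\}\times A_x$, because the sets $\{x\}\times Y$ are pairwise disjoint, so applying $h_x$ returns $A_x$. One sees at once that both maps preserve inclusion, so the bijection is in fact an order-isomorphism. No step here is a genuine obstacle; the only point requiring a moment's care is the equivalence ``$E$ open $\iff$ every $E^x$ open'', which rests on the observation that the cross sections $\{x\}\times Y$ form an open (indeed clopen) cover of $\hat{X}$ --- precisely the feature distinguishing a constant \'{e}tal\'{e} space from a general \'{e}tal\'{e} space.
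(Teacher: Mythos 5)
Your proof is correct and follows essentially the same route as the paper: decompose $E$ into the cross sections $E^x=(\{x\}\times Y)\cap E$, use that the sets $\{x\}\times Y$ are open and homeomorphic to $Y$, and conclude that $E$ is open iff each cross section is of the form $\{x\}\times A_x$ with $A_x$ open in $Y$. You merely make the bijection and the mutual-inverse check explicit (and note it is an order-isomorphism, which the paper records separately in Corollary~\ref{kl}); no substantive difference.
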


Combining this with Proposition~\ref{io} and making some trivial observation about maps being order-preserving, we have the following. 

\begin{corollary}
\label{kl}
There is an order-isomorphism $\Phi:\mc{O}(Y)^X\to \Sub(\hat{X})$ where $\Phi(\alpha)$ is the subobject corresponding to the open set $E\subseteq X\times Y$ whose cross section at $x$ is $E^x=\{x\}\times \alpha(x)$. 
\end{corollary}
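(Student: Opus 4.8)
The plan is to obtain $\Phi$ as the composite of the two bijections already in hand and then check that it preserves and reflects order. First I would apply Proposition~\ref{jk} to get a bijection $\Psi\colon\mc{O}(Y)^X\to\mc{O}(\hat X)$ carrying a family $\alpha\in\mc{O}(Y)^X$ to the open subset $E\subseteq X\times Y$ whose cross section at $x$ is $E^x=\{x\}\times\alpha(x)$; well-definedness and bijectivity are precisely the content of that proposition, resting on the fact that an open subset of $X\times Y$ is exactly a union of cross sections, each necessarily of the form $\{x\}\times A$ with $A$ open in $Y$. Next I would apply Proposition~\ref{io} to the \'{e}tal\'{e} space $(E,\pi)=\hat X$, obtaining an order-isomorphism $\Theta\colon\mc{O}(\hat X)\to\Sub(\hat X)$ that sends an open set $E'$ to the equivalence class of the inclusion $\iota_{E'}\colon E'\to X\times Y$. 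Then $\Phi:=\Theta\circ\Psi$ is a bijection $\mc{O}(Y)^X\to\Sub(\hat X)$ with the description in the statement.

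It remains to see $\Phi$ is an order-isomorphism. Since $\Theta$ already is one by Proposition~\ref{io}, it suffices that $\Psi$ be an order-isomorphism for the pointwise order on $\mc{O}(Y)^X$ and inclusion on $\mc{O}(\hat X)$. This is routine: if $\alpha(x)\subseteq\beta(x)$ for all $x$ then $\Psi(\alpha)=\bigcup_x\{x\}\times\alpha(x)\subseteq\bigcup_x\{x\}\times\beta(x)=\Psi(\beta)$, and conversely, intersecting $\Psi(\alpha)\subseteq\Psi(\beta)$ with the cross section $\{x\}\times Y$ and transporting along the homeomorphism $\{x\}\times Y\cong Y$ returns $\alpha(x)\subseteq\beta(x)$.

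I do not expect a genuine obstacle: the corollary is bookkeeping that stacks Propositions~\ref{io} and~\ref{jk}. The one point meriting a sentence of care is the claim that $\Psi$ \emph{reflects} order --- that an inclusion between total spaces already holds cross section by cross section --- but this is immediate from the monotonicity of $(\,\cdot\,)\cap(\{x\}\times Y)$ together with the reconstruction of $\alpha$ from the cross sections of $\Psi(\alpha)$ used in proving Proposition~\ref{jk}. One might also note that $\Phi$ is a total assignment on all of $\mc{O}(Y)^X$, which is again exactly what Proposition~\ref{jk} guarantees.
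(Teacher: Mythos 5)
Your proposal is correct and follows exactly the paper's route: the corollary is obtained by composing the bijection of Proposition~\ref{jk} with the order-isomorphism of Proposition~\ref{io} and checking the ``trivial observation'' that the cross-section correspondence preserves and reflects the pointwise order. Nothing further is needed.
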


H\"ohle \cite{Hohle1,Hohle2} has provided a link between fuzzy sets and topos theory. From the perspective of \'{e}tal\'{e} spaces it goes as follows \cite[p.~1179]{Hohle2}. Let $Y$ be the topological space whose underlying set is $[0,1)$ and whose open sets are the initial segments $[0,\lambda)$ for $0\leq \lambda\leq 1$. Then the Heyting algebra $\mc{O}(Y)$ of open sets of $Y$ is isomorphic to the real unit interval $\A=[0,1]$. For a set $X$, the traditional definition of a fuzzy subset of $X$ is a map $\alpha:X\to [0,1]$. By Corollary~\ref{kl} these correspond to open subsets of the constant \'{e}tal\'{e} $\hat{X}$ over $Y$, and hence to subobjects of this constant \'{e}tal\'{e}. To describe the situation, consider Figure~\ref{fig4}. 

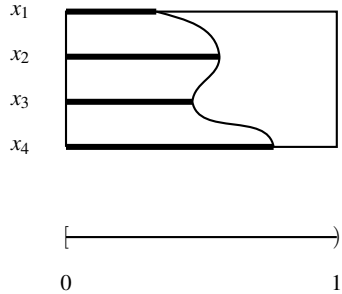
\begin{figure}[h]
\begin{tikzpicture}[scale=1.2]
\draw [thick] (0,0) -- (3,0);
\node at (0,0) {$[$};
\node at (3,0) {$)$};
\draw[thick] (0,1)--(3,1)--(3,2.5)--(0,2.5)--(0,1);
\node at (-.5,1) {$x_4$};
\node at (-.5,1.5) {$x_3$};
\node at (-.5,2) {$x_2$};
\node at (-.5,2.5) {$x_1$};
\node at (0,-.5) {$0$};
\node at (3,-.5) {$1$};
\draw[thick] (1,2.5) to [out=-15,in=95] (1.7,2) to [out=-90,in=85] (1.4,1.5) to [out=-80,in=100] (2.3,1);
\draw[line width=.8mm] (0,2.5)--(1,2.5);
\draw[line width=.8mm] (0,2)--(1.7,2);
\draw[line width=.8mm] (0,1.5)--(1.4,1.5);
\draw[line width=.8mm] (0,1)--(2.3,1);

\end{tikzpicture}
\caption{A subobject of the constant \'{e}tal\'{e} space for $X$ over the space $Y=[0,1)$}
\label{fig4}
\end{figure}

In Figure~\ref{fig4} we show an open subset of the constant \'{e}tal\'{e} $\hat{X}$ for a set $X=\{x_1,\ldots,x_4\}$ over the space $Y=[0,1)$ whose open sets are the initial segments $[0,\lambda)$ where $0\leq\lambda\leq 1$. By Proposition~\ref{jk} this is done by specifying an open subset of $Y$ for each $x\in X$, hence an initial segment $[0,\lambda_x)$ for each $x\in X$. These are the dark bars in the figure. From this open set, we get a function $\alpha:X\to [0,1]$ where $\alpha(x)=\lambda_x$. This is the traditional definition of a fuzzy subset of $X$. In effect, the traditional view of a fuzzy subset of $X$ is obtained by simply turning Figure~\ref{fig4} 90$^0$ counterclockwise!

\section{The convolution algebra from a topos perspective}

Consider the convolution algebra $L^\mathfrak{X}$ in the case that $L$ is the Heyting algebra $\mc{O}(Y)$ of open sets of a topological space $Y$ and $\mathfrak{X}=(X,(R_j)_J)$ is a relational structure of type $\tau:J\to\mathbb{N}$. This convolution algebra is a Heyting algebra with an additional $n_j$-ary operation $f_j$ for each $n_j+1$-ary relation $R_j$. The Heyting algebra underlying this convolution algebra is $L^X$, and we have seen in Corollary~\ref{kl} that this is isomorphic to the Heyting algebra of subobjects of the constant \'{e}tal\'{e} space $\hat{X}$. Here we consider the matter of incorporating additional operations to this setting. 

\begin{definition}
Let $(E_1,\pi_1),\ldots, (E_n,\pi_n)$ be \'{e}tal\'{e} spaces. Their product is the \'{e}tal\'{e} space $(E,\pi)$ where $E$ is the subspace of the product space $E_1\times\cdots\times E_n$  $$E = \{(x_1,\ldots,x_n)\,\mid\, \pi_1(x_1),\ldots,\pi_n(x_n)\mbox{ have the same value}\}$$ and $\pi:E\to Y$ sends $(x_1,\ldots,x_n)$ to the common value of $\pi_i(x_i)$. 
\end{definition}

For an \'{e}tal\'{e} space $(E,\pi)$ we use $(E,\pi)^n$ for the $n$-fold product of $(E,\pi)$ with itself. Recall that an $n$-ary relation on a set $X$ is a subset $R\subseteq X^n$. The corresponding notion for \'{e}tal\'{e} spaces leads to the following. 

\begin{definition}
For $(E,\pi)$ an \'{e}tal\'{e} space, an $n$-ary relation on $(E,\pi)$ is a subobject $R$ of $(E,\pi)^n$. For a type $\tau:J\to\mathbb{N}$, a relational \'{e}tal\'{e} $\mathcal{E}=(E,\pi,(R_j)_J)$ of type $\tau$ consists of an \'{e}tal\'{e} space $(E,\pi)$ and an $n_j+1$-ary relation on $(E,\pi)$ for each $j\in J$. 
\end{definition}

For a set $X$, it is not difficult to see that the $n^{th}$ power $\hat{X}^n$ of the constant \'{e}tal\'{e} $\hat{X}$ is the constant \'{e}tal\'{e} for $X^n$. Thus if $R\subseteq X^n$ is an $n$-ary relation on $X$, then the constant \'{e}tal\'{e} $\hat{R}$ is a subobject of the constant \'{e}tal\'{e} $\hat{X}^n$. We use this in the following. 

\begin{definition}
For $\mathfrak{X}=(X,(R_j)_J)$ a relational structure of type $\tau$, the associated constant relational \'{e}tal\'{e} of type $\tau$ is defined to be $\hat{\mathfrak{X}}=(\hat{X},(\hat{R}_j)_J)$. 
\end{definition}

For an $n+1$-ary relation $R$ on a set $X$ and subsets $A_1,\ldots,A_n\subseteq X$, the relational image $R(A_1,\ldots,A_n)$ is a subset of $X$ that can be constructed in two equivalent ways. Directly, it is given by 
\[R(A_1,\ldots,A_n) = \{x\,\mid\, \mbox{there are $a_1\in A_1,\ldots,a_n\in A_n$ with }(a_1,\ldots,a_n,x)\in R\}\]
This is described algebraically as $R(A_1,\ldots,A_n) = \pi_{n+1}\left((A_1\times\cdots\times A_n\times X)\cap R\right)$ where $\pi_{n+1}$ is projection onto the $(n+1)^{st}$ coordinate. Both approaches can be lifted to the setting of \'{e}tal\'{e} spaces, and give equivalent results.  

\begin{definition}
Let $(E,\pi)$ be a \'{e}tal\'{e} space and $R$ be a subobject of $(E,\pi)^{n+1}$. Then for subobjects $A_1,\ldots,A_n$ of $(E,\pi)$, the \'{e}tal\'{e} relational image is the subobject of $(E,\pi)$ given by 
\[ R(A_1,\ldots,A_n)=\pi_{n+1}((A_1\times\cdots\times A_n\times E)\cap R)\]
\end{definition}

That this gives a subobject follows from the fact that subobjects are given by open subsets, and projection maps are open. It is not difficult to see that the relational image can be given in more direct terms. For each $y\in Y$, the fiber $R_y$ is a subset of $E_y^{n+1}$, hence is an $n+1$-ary relation on $E_y$. Then the image $R(A_1,\ldots,A_n)$ is the subobject of $E$ whose fiber over $y$ is the relational image $R_y((A_1)_y,\ldots,(A_n)_y)$ of the fibers of the subobjects $A_1,\ldots,A_n$. 

\begin{definition}
\label{sd}
For $\mathcal{E}=(E,\pi,(R_j)_J)$ a relational \'{e}tal\'{e} of type $\tau$, use relational \'{e}tal\'{e} image to define its complex algebra $\mathcal{E}^+=(\Sub(E,\pi),(g_j)_J)$ to be the algebra of type $\tau$ where 
\[ g_j(A_1,\ldots,A_{n_j}) = R_j(A_1,\ldots,A_{n_j})\]
\end{definition}

We next consider relational image more closely in the setting of a constant relational \'{e}tal\'{e}. Consider the following example where $X=\{x_1,x_2,x_3,x_4\}$ and $R$ is the ternary relation on $X$ given by $R=\{(x_1,x_1,x_1),(x_2,x_2,x_3),(x_1,x_3,x_4),(x_3,x_2,x_4)\}$. Recall that a subobject $A$ of the constant \'{e}tal\'{e} $\hat{X}$ is given by a specification of an open set for its cross section $A^x$ for each $x\in X$. Two subobjects $A_1$ and $A_2$ of $\hat{X}$ are shown to the left in Figure~\ref{fig5}. For each $x\in X$ the open sets for their cross sections are depicted by thick horizontal bars. The relational image $A=\hat{R}(A_1,A_2)$ is shown at right in this figure in a similar fashion.
\vspace{2ex}

\begin{figure}[h]
\begin{tikzpicture}[xscale=.9,yscale=1.2]
\draw [thick] (0,0) -- (3,0);
\draw[thick] (0,1)--(3,1)--(3,2.5)--(0,2.5)--(0,1);
\node at (-.5,1) {$x_4$};
\node at (-.5,1.5) {$x_3$};
\node at (-.5,2) {$x_2$};
\node at (-.5,2.5) {$x_1$};
\node at (-.5,0) {$Y$};
\draw[line width=.8mm] (0,2.5)--(2,2.5);
\draw[line width=.8mm] (0,2)--(2,2);
\draw[line width=.8mm] (1,1.5)--(3,1.5);
\draw[line width=.8mm] (0,1)--(3,1);
\end{tikzpicture}
\hspace{4ex}
\begin{tikzpicture}[xscale=.9,yscale=1.2]
\draw [thick] (0,0) -- (3,0);
\draw[thick] (0,1)--(3,1)--(3,2.5)--(0,2.5)--(0,1);
\node at (-.5,1) {$x_4$};
\node at (-.5,1.5) {$x_3$};
\node at (-.5,2) {$x_2$};
\node at (-.5,2.5) {$x_1$};
\node at (-.5,0) {$Y$};
\draw[line width=.8mm] (1,2.5)--(3,2.5);
\draw[line width=.8mm] (2,2)--(3,2);
\draw[line width=.8mm] (0,1.5)--(1,1.5);
\draw[line width=.8mm] (0,1)--(2,1);
\end{tikzpicture}
\hspace{4ex}
\begin{tikzpicture}[xscale=.9,yscale=1.2]
\draw [thick] (0,0) -- (3,0);
\draw[thick] (0,1)--(3,1)--(3,2.5)--(0,2.5)--(0,1);
\node at (-.5,1) {$x_4$};
\node at (-.5,1.5) {$x_3$};
\node at (-.5,2) {$x_2$};
\node at (-.5,2.5) {$x_1$};
\node at (-.5,0) {$Y$};
\draw[line width=.8mm] (1,2.5)--(2,2.5);
\draw[line width=.8mm] (0,1)--(1,1);
\draw[line width=.8mm] (2,1)--(3,1);
\end{tikzpicture}

\caption{Subobjects $A_1, A_2$ and their relational image $\hat{R}(A_1,A_2)$ in $\hat{X}$.}
\label{fig5}
\end{figure}
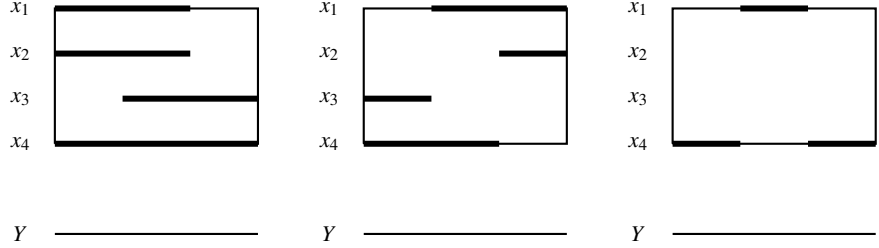

To compute the relational image, we introduce some notation. For each $i=1,2$ and $x\in X$ let $\alpha_i(x)$ and $\alpha(x)$ be the open subsets of $Y$ with $A_i^x=\{x\}\times \alpha_i(x)$ and $A^{x}=\{x\}\times \alpha(x)$. In Figure~\ref{fig5}, $\alpha_1(x_1)$ and $\alpha_1(x_2)$ are the first two thirds of $Y$, $\alpha_1(x_3)$ is the final two thirds of $Y$, and $\alpha_1(x_4)$ is all of $Y$; while $\alpha_2(x_1)$ through $\alpha_2(x_4)$ are the final two thirds, the final third, the first third, and the first two thirds of $Y$ respectively. 

To compute the relational image $A$, we recall that this is done fiberwise. We first focus on the cross section $A^{x_1}=(\{x_1\}\times Y)\cap A$. There is only one way to obtain $x_1$ as the third component of an ordered triple in $R$, that is in the triple $(x_1,x_1,x_1)$. So $(x_1,y)$ is an element of $A^{x_1}$ if and only if $(x_1,y)\in A_1^{x_1}$ and $(x_1,y)\in A_2^{x_1}$. Thus $\alpha(x_1)=\alpha_1(x_1)\cap \alpha_2(x_2)$ is the middle third of~$Y$. There is no way to obtain $x_2$ as the third component of a triple in $R$, so $\alpha(x_2)$ is empty. The only way to obtain $x_3$ as the third component of a triple is in $(x_2,x_2,x_3)$. So $(x_3,y)\in A^{x_3}$ if and only if $(x_2,y)\in A_1^{x_2}$ and $(x_2,y)\in A_2^{x_2}$. So $\alpha(x_3)=\alpha_1(x_2)\cap \alpha_2(x_2)$ which is empty. There are two ways to obtain $x_4$ as the third coordinate of a triple in $R$, from $(x_1,x_3,x_4)$ and from $(x_3,x_2,x_4)$. So $(x_4,y)\in A^{x_4}$ if and only if $(x_1,y)\in A_1^{x_1}$ and $(x_3,y)\in A_2^{x_3}$, or $(x_3,y)\in A_1^{x_3}$ and $(x_2,y)\in A_2^{x_2}$. So $\alpha(x_4)=(\alpha_1(x_1)\cap \alpha_2(x_3))\cup(\alpha_1(x_3)\cap \alpha_2(x_2))$. This is the first and last third of~$Y$. 

\begin{lemma}
\label{mm}
Let $R$ be an $n+1$-ary relation on a set $X$. Let $A_1,\ldots,A_n$ be subobjects of the constant \'{e}tal\'{e} $\hat{X}$ whose cross sections are given by $A_i^x = \{x\}\times\alpha_i(x)$ for $i=1,\ldots, n$. Then the relational image $A=\hat{R}(A_1,\ldots,A_n)$ is the subobject whose cross sections are $A^x=\{x\}\times\alpha(x)$ where 
\[\alpha(x)=\bigcup\{\alpha_1(x_1)\cap\cdots\cap\alpha_n(x_n)\,\mid\, (x_1,\ldots,x_n,x)\in R\}\]
\end{lemma}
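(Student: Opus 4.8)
The plan is to unwind the two descriptions in play --- the \'{e}tal\'{e} relational image $R(A_1,\ldots,A_n)=\pi_{n+1}((A_1\times\cdots\times A_n\times E)\cap R)$ from its definition, and the cross-section description of subobjects of a constant \'{e}tal\'{e} from Corollary~\ref{kl} --- and to check that they agree term by term. The worked example immediately preceding the statement is exactly the case $n=2$ of this computation, so the proof consists of carrying it out in general.

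First I would fix the ambient space. Using the observation already recorded that $\hat X^{n+1}$ is the constant \'{e}tal\'{e} $\widehat{X^{n+1}}$ for $X^{n+1}$, and that the \'{e}tal\'{e} product $A_1\times\cdots\times A_n\times\hat X$ is the subspace of $\widehat{X^{n+1}}$ consisting of those $((x_1,\ldots,x_{n+1}),y)$ on which the projections to $Y$ all agree, I would note that this product is precisely the open subset of $\widehat{X^{n+1}}$ whose cross section at $(x_1,\ldots,x_{n+1})$ is $\{(x_1,\ldots,x_{n+1})\}\times(\alpha_1(x_1)\cap\cdots\cap\alpha_n(x_n))$, the final factor $\hat X$ contributing all of $Y$. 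Likewise, $\hat R$ is the open subset of $\widehat{X^{n+1}}$ whose cross section at $(x_1,\ldots,x_{n+1})$ is all of $Y$ when $(x_1,\ldots,x_{n+1})\in R$ and is empty otherwise.

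Next I would intersect and project. The cross section of $(A_1\times\cdots\times A_n\times\hat X)\cap\hat R$ at $(x_1,\ldots,x_{n+1})$ is $\{(x_1,\ldots,x_{n+1})\}\times(\alpha_1(x_1)\cap\cdots\cap\alpha_n(x_n))$ if $(x_1,\ldots,x_{n+1})\in R$ and is empty otherwise. Applying $\pi_{n+1}$, which sends $((x_1,\ldots,x_{n+1}),y)$ to $(x_{n+1},y)$, and reading off which pairs $(x,y)$ occur in the image, gives that the cross section of $A=\hat R(A_1,\ldots,A_n)$ at $x$ is $\{x\}\times\alpha(x)$ with $\alpha(x)=\bigcup\{\alpha_1(x_1)\cap\cdots\cap\alpha_n(x_n)\mid(x_1,\ldots,x_n,x)\in R\}$, which is the claim. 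That this is an open subset of $Y$, hence a genuine subobject, is automatic since $\alpha(x)$ is a union of finite intersections of open sets of $Y$; equivalently, it is the openness of projections noted just after the definition of the \'{e}tal\'{e} relational image.

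The computation is routine; the only point needing care is the bookkeeping between the categorical (fiber) product of \'{e}tal\'{e} spaces and the ordinary product of underlying sets, so that the requirement that the projections agree is correctly reflected in the cross-section picture. A slightly shorter variant avoids this by invoking the fiberwise description of relational image recorded earlier: the fiber of $A_i$ over $y\in Y$ is $\{x\in X\mid y\in\alpha_i(x)\}$, one forms the ordinary relational image $R_y$ of these subsets of $X$ for each $y$, and then re-reads the answer in cross sections; this trades the fiber-product bookkeeping for the equally routine translation between fibers and cross sections of a constant \'{e}tal\'{e}.
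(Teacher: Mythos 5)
Your proposal is correct and follows essentially the same route as the paper: the paper's proof is just the one-line pointwise unwinding that $(x,y)\in A$ iff there is $(x_1,\ldots,x_n,x)\in R$ with $(x_i,y)\in A_i$ for each $i$, which is exactly your ``intersect and project'' computation written succinctly. Your extra bookkeeping with $\widehat{X^{n+1}}$ and the fiberwise variant are fine elaborations of the same argument, not a different method.
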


\begin{proof}
We have $(x,y)\in A$ if and only if there is $(x_1,\ldots,x_n,x)\in\hat{R}$ with $(x_1,y)\in A_1,\ldots, (x_n,y)\in A_n$. 
\end{proof}


\begin{theorem}
Let $Y$ be a topological space and $L=\mc{O}(Y)$ be its Heyting algebra of open sets. For $\mathfrak{X}=(X,(R_j)_J)$ a relational structure of type $\tau$, there is an isomorphism $\Phi:L^{\mathfrak{X}}\to\hat{\mathfrak{X}}^+$ from the convolution algebra of $\mathfrak{X}$ over $L$ to the complex algebra of the constant relational \'{e}tal\'{e} $\hat{\mathfrak{X}}$ where $\Phi(\alpha)$ is the subobject $A$ of $\hat{X}$ with cross sections $A^x=\{x\}\times\alpha(x)$ for each $x\in X$. 
\end{theorem}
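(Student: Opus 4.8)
The plan is to show that the order-isomorphism $\Phi:L^X\to\Sub(\hat{X})$ of Corollary~\ref{kl} is in fact an isomorphism of algebras of type $\tau$, i.e. that it intertwines each convolution operation $f_j$ on $L^X$ with the corresponding complex-algebra operation $g_j=\hat{R}_j(-,\ldots,-)$ on $\Sub(\hat{X})$. Since Corollary~\ref{kl} already gives that $\Phi$ is a bijection and an order-isomorphism (hence a Heyting-algebra isomorphism on the underlying lattices), the only thing left to verify is the single identity
\[
\Phi\big(f_j(\alpha_1,\ldots,\alpha_{n_j})\big)=g_j\big(\Phi(\alpha_1),\ldots,\Phi(\alpha_{n_j})\big)
\]
for every $j\in J$ and every $\alpha_1,\ldots,\alpha_{n_j}\in L^X$. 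Both sides are subobjects of $\hat{X}$, hence (Proposition~\ref{jk}) are determined by their cross sections at each $x\in X$, so it suffices to compare cross sections.

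First I would fix $j$, write $n=n_j$ and $R=R_j$, and unwind the right-hand side using Lemma~\ref{mm}: with $A_i=\Phi(\alpha_i)$, so that $A_i^x=\{x\}\times\alpha_i(x)$, the lemma says that $g_j(A_1,\ldots,A_n)=\hat{R}(A_1,\ldots,A_n)$ has cross section at $x$ equal to $\{x\}\times\beta(x)$, where
\[
\beta(x)=\bigcup\{\alpha_1(x_1)\cap\cdots\cap\alpha_n(x_n)\,\mid\,(x_1,\ldots,x_n,x)\in R\}.
\]
Next I would unwind the left-hand side: by definition of $\Phi$, the subobject $\Phi(f_j(\alpha_1,\ldots,\alpha_n))$ has cross section at $x$ equal to $\{x\}\times f_j(\alpha_1,\ldots,\alpha_n)(x)$, and by Definition~\ref{aa} (with $L=\mc{O}(Y)$, so finite meets are intersections and arbitrary joins are unions),
\[
f_j(\alpha_1,\ldots,\alpha_n)(x)=\bigvee\{\alpha_1(x_1)\wedge\cdots\wedge\alpha_n(x_n)\,\mid\,(x_1,\ldots,x_n,x)\in R\}=\bigcup\{\alpha_1(x_1)\cap\cdots\cap\alpha_n(x_n)\,\mid\,(x_1,\ldots,x_n,x)\in R\}.
\]
This is literally $\beta(x)$, so the two subobjects have identical cross sections at every $x$, hence are equal. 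This handles the operations of positive arity; for nullary $R_j$ (the constants), the same comparison degenerates to checking that $\Phi$ sends the convolution constant to the relational-image constant, which is the $n=0$ case of the displayed formulas. That completes the argument.

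The only real content is the bookkeeping that the join in the convolution algebra over $L=\mc{O}(Y)$ coincides with the union appearing in Lemma~\ref{mm}, which is immediate from the explicit description of meets and joins in $\mc{O}(Y)$ recorded just after the definition of $\mc{O}(Y)$; there is no genuine obstacle, since all the hard work — identifying $\Sub(\hat{X})$ with $\mc{O}(Y)^X$ (Proposition~\ref{io}, Proposition~\ref{jk}, Corollary~\ref{kl}) and computing relational image fiberwise (Lemma~\ref{mm}) — has already been done. If anything requires a word of care, it is simply noting that $\hat{R}_j$ is the correct subobject of $\hat{X}^{n_j+1}$ to use, i.e. that $\widehat{X^{n}}\cong\hat{X}^{n}$ and under this identification $\hat{R}_j\subseteq\hat{X}^{\,n_j+1}$, which was observed in the discussion preceding the definition of the constant relational \'{e}tal\'{e}; Lemma~\ref{mm} is then exactly the fiberwise formula for relational image applied to this subobject.
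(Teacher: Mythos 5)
Your proposal is correct and follows essentially the same route as the paper's own proof: use Corollary~\ref{kl} for the Heyting algebra isomorphism, then compare cross sections at each $x$, matching the convolution formula of Definition~\ref{aa} against the fiberwise relational image of Lemma~\ref{mm}, with joins/meets in $\mc{O}(Y)$ being unions/intersections. The extra remarks on nullary operations and on $\widehat{X^{n}}\cong\hat{X}^{n}$ are fine but add nothing beyond what the paper already assumes.
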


\begin{proof}
Corollary~\ref{kl} shows that $\Phi$ is a Heyting algebra isomorphism. It remains to show that $\Phi$ is compatible with the additional operations of these structures. Let $j\in J$, and $\alpha_1,\ldots,\alpha_{n_j}\in L^X$, and for $i\leq n_j$ let $A_i=\Phi(\alpha_i)$ be the corresponding subobjects of $\hat{X}$. To show that $\Phi$ is compatible with the operations, we must show $\Phi(f_j(\alpha_1,\ldots,\alpha_{n_j})) = \hat{R}_j(A_1,\ldots,A_{n_j})$. 

To see that these subobjects are equal, it is enough to show that for each $x\in X$ their cross sections at $x$ agree. By Definition~\ref{aa} and the definition of $\Phi$, the cross section of $\Phi(f_j(\alpha_1,\ldots,\alpha_{n_j}))$ at $x$ is the open set $\{x\}\times U$ where 

\[ U = \bigvee\{\alpha_1(x_1)\wedge\cdots\wedge\alpha_{n_j}(x_{n_j})\,\mid\, (x_1,\ldots,x_{n_j},x)\in R\}\]

\noindent On the other hand, by Lemma~\ref{mm} the cross section of $\hat{R}(A_1,\ldots,A_{n_j})$ at $x$ is the open set $\{x\}\times V$ where 
\[V=\bigcup\{\alpha_1(x_1)\cap\cdots\cap\alpha_n(x_n)\,\mid\, (x_1,\ldots,x_n,x)\in R\}\]

\noindent Noting that in $L=\mc{O}(Y)$ arbitrary joins are given by unions, and finite meets are given by intersections, we have that $U=V$, hence the cross sections at $x$ agree. 
\end{proof}

In the following, $Y=[0,1)$ is the topological space with open sets the sets $[0,\lambda)$ where $\lambda\leq 1$, $L$ is the lattice $\mc{O}(Y)$ of open sets of $Y$, and $\mathfrak{I} = ([0,1],\wedge,\vee,\neg,0,1)$ is the relational structure built from the unit interval and its operations. 

\begin{corollary}\label{fg}
The type-2 fuzzy truth value algebra $\M$ (see Definition~\ref{type2}) is isomorphic to the convolution algebra $L^\mathfrak{I}$ and is isomorphic to the complex algebra $\hat{\mathfrak{I}}^+$ of the constant relational \'{e}tal\'{e} $\hat{\mathfrak{I}}$ in the topos of \'{e}tal\'{e} spaces over $Y=[0,1)$. 
\end{corollary}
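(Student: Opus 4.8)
The plan is to deduce Corollary~\ref{fg} by chaining together three observations, the last of which is the Theorem just proved. Concretely, the goal is to produce a string of isomorphisms $\M \cong \A^{\mathfrak{I}} \cong L^{\mathfrak{I}} \cong \hat{\mathfrak{I}}^+$, where $L=\mc{O}(Y)$ and $Y=[0,1)$ carries the lower topology.

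First I would verify that the type-2 fuzzy truth value algebra $\M$ of Definition~\ref{type2} is \emph{literally} the convolution algebra $\A^{\mathfrak{I}}$ of Definition~\ref{aa}, taking $\A=[0,1]$ as a complete lattice and $\mathfrak{I}=(\A,\wedge,\vee,\neg,0,1)$ as the associated relational structure (two ternary relations from $\wedge,\vee$, a binary relation from $\neg$, two unary relations from the constants). This is a direct comparison of formulas: the convolution operation attached to the ternary relation $\vee=\{(y,z,x)\mid y\vee z=x\}$ is exactly $(\alpha\sqcup\beta)(x)=\bigvee\{\alpha(y)\wedge\beta(z)\mid y\vee z=x\}$, and similarly for $\sqcap$ and $\neg$; for the nullary convolutions coming from the unary relations $\{0\}$ and $\{1\}$, the empty meet is the top element $1$ of $\A$ and the empty join is $0$, so the resulting constant functions are precisely $\ul{0}$ and $\ul{1}$.

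Second I would observe that $\lambda\mapsto[0,\lambda)$ is an order-isomorphism from $\A$ onto $L=\mc{O}(Y)$ --- this is exactly where the choice of the lower topology on $[0,1)$ enters, since the open sets of $Y$ are precisely the initial segments $[0,\lambda)$ for $0\le\lambda\le 1$. An order-isomorphism of complete lattices preserves arbitrary joins and finite meets, and the convolution operations of Definition~\ref{aa} are built using only these; hence applying this isomorphism pointwise to functions $\A\to\A$ yields an isomorphism of convolution algebras $\A^{\mathfrak{I}}\cong L^{\mathfrak{I}}$. Then I would invoke the Theorem with $\mathfrak{X}=\mathfrak{I}$ to obtain the isomorphism $\Phi\colon L^{\mathfrak{I}}\to\hat{\mathfrak{I}}^+$, and compose the three maps; the composite sends $\alpha\in\A^{\A}$ to the subobject of $\hat{\A}$ whose cross section at $x$ is $\{x\}\times[0,\alpha(x))$.

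I do not expect a genuine obstacle here: the content is bookkeeping rather than mathematics. The two places that require a little care are the first step --- in particular the two nullary operations, where one must remember that the empty meet is the top element --- and the remark in the second step that the convolution operations, being defined purely in terms of arbitrary joins and finite meets, transport along any isomorphism of complete lattices. Everything else is an immediate appeal to Corollary~\ref{kl} and the Theorem.
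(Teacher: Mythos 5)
Your proposal is correct and is exactly the argument the paper leaves implicit: the corollary follows from the earlier observations that $\M$ is literally the convolution algebra of $\mathfrak{I}$ over the unit interval and that $\mc{O}(Y)\cong[0,1]$ via $\lambda\mapsto[0,\lambda)$, combined with the Theorem applied to $\mathfrak{X}=\mathfrak{I}$. Your care with the nullary operations and with transporting the convolution structure along the complete lattice isomorphism fills in the bookkeeping correctly.
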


Each topos has an {\em internal language} and an {\em internal logic} \cite{GoldblattTopos,Johnstone,Maclean}. Objects in a topos have an external view which we have described, as well as an internal view given by this internal language. For example, a relational \'{e}tal\'{e} is internally simply a relational structure in the topos of \'{e}tal\'{e} spaces. We wish to use this internal view to describe the complex algebra $\mc{E}^+$ of a relational \'{e}tal\'{e} $\mc{E}$, bearing in mind that the description we have given in Definition~\ref{sd} is an external one in the world of real sets and operations. 

Each object $E$ in a topos has a power object $\mc{P}(E)$ that is the internal version of the power set of $E$. For $E$ an \'{e}tal\'{e} space, the external view of $\mc{P}(E)$ is an \'{e}tal\'{e} space, but one that is highly complex. There is a different way to connect $\mc{P}(E)$ to the external world of sets that is more useful. To describe this, we note that a {\em global section} of an \'{e}tal\'{e} space $(E,\pi)$ is a continuous map $s:Y\to E$ with $\pi\circ s = id_Y$. The following is well known \cite{GoldblattTopos,Elephant,Maclean}.  

\begin{theorem}
Let $E$ be an object in the topos of \'{e}tal\'{e} spaces over $Y$. Then the collection $\mc{G}(\mc{P}(E))$ of global sections of the power object of $E$ forms a Heyting algebra and there is a Heyting algebra isomorphism $\Psi_E:\Sub(E)\to\mc{G}(\mc{P}(E))$. 
\end{theorem}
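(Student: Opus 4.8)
The plan is to derive this from the defining universal property of the power object, together with the standard fact that $\mc{P}(E)$ is internally a Heyting algebra. Recall that $\mc{P}(E)$ carries a ``membership'' subobject $\in_E$ of $E\times\mc{P}(E)$ with the property that every subobject $R$ of $E\times A$ is of the form $(\mathrm{id}_E\times\chi_R)^{-1}(\in_E)$ for a unique morphism $\chi_R\colon A\to\mc{P}(E)$. Taking $A$ to be the terminal object $1$ and identifying $E\times 1$ with $E$, this already produces a bijection $\Psi_E\colon\Sub(E)\to\mc{G}(\mc{P}(E))$, sending a subobject $S$ of $E$ to the global section $\chi_S\colon 1\to\mc{P}(E)$ that names it. So the opening move is just to record this bijection; all the content is in the two Heyting-algebra claims.

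For the first claim, I would use that, in the internal language of $\mc{E}(Y)$, the object $\mc{P}(E)$ carries the full Heyting (indeed frame) structure of a powerset, presented by morphisms $\wedge,\vee,\to\colon\mc{P}(E)^2\to\mc{P}(E)$ and $\ul 0,\ul 1\colon 1\to\mc{P}(E)$ satisfying the Heyting algebra axioms as commuting diagrams. The global sections functor $\mc{G}=\mathrm{Hom}(1,-)\colon\mc{E}(Y)\to\mathbf{Set}$ is representable, hence preserves all limits, in particular finite products; it therefore carries this Heyting-algebra object to an honest Heyting algebra. That is precisely the statement that $\mc{G}(\mc{P}(E))$ is a Heyting algebra, with operations $\mc{G}(\wedge),\mc{G}(\vee),\mc{G}(\to)$ and bounds $\mc{G}(\ul 0),\mc{G}(\ul 1)$.

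It remains to see that $\Psi_E$ respects this structure. Since the operations of any Heyting algebra are determined by its order (meet and join are inf and sup, $a\to b=\bigvee\{c\mid c\wedge a\le b\}$, and $\ul 0,\ul 1$ are the bounds), it is enough to check that $\Psi_E$ is an order-isomorphism. The order on $\mc{G}(\mc{P}(E))$ coming from its Heyting structure is ``$s\le t$ iff $s\wedge t=s$''; since $\mc{G}$ preserves the equalizer defining the internal inclusion order $\le_E\hookrightarrow\mc{P}(E)^2$, this is equivalent to ``$(s,t)\colon 1\to\mc{P}(E)^2$ factors through $\le_E$''. I would then invoke the standard fact that $\le_E$ represents external inclusion of subobjects, i.e.\ that for each $A$ the morphisms $A\to\le_E$ correspond naturally to pairs $R\subseteq S$ of subobjects of $E\times A$; specializing $A=1$ yields that $(\chi_S,\chi_T)$ factors through $\le_E$ exactly when $S\subseteq T$ in $\Sub(E)$. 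Hence $\Psi_E$ is an order-isomorphism, and so a Heyting algebra isomorphism; combined with Proposition~\ref{io} it also identifies $\mc{G}(\mc{P}(E))$ with $\mc{O}(E)$.

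The one place where genuine topos theory is used, rather than bookkeeping, is the identification of the internal order of $\mc{P}(E)$ with external inclusion of subobjects --- equivalently, that $\mc{G}(\wedge),\mc{G}(\vee)$, etc., compute the lattice operations on $\Sub(E)$. This is the expected obstacle. For a reader unwilling to cite it, my fallback would be to pass through the equivalence $\mc{E}(Y)\simeq\mathrm{Sh}(Y)$: there $E$ becomes its sheaf $F$ of continuous sections, $\mc{P}(E)$ becomes the power object $\mc{P}(F)=\Omega^F$ in $\mathrm{Sh}(Y)$, whose value on an open $U$ is the Heyting algebra $\Sub_{\mathrm{Sh}(U)}(F|_U)$ with the obvious restriction maps, so that global sections of $\mc{P}(E)$ are literally the subsheaves of $F$ --- that is, $\Sub(E)$ with its Heyting structure --- making $\Psi_E$ visibly an isomorphism. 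Either route is routine modulo the cited background on topoi.
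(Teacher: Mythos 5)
Your argument is correct, but there is nothing in the paper to compare it against: the paper states this theorem as ``well known'' and simply cites Goldblatt, Johnstone and MacLane--Moerdijk, giving no proof. What you have written is essentially the standard argument from those sources, and it hangs together: the bijection $\Sub(E)\cong\mc{G}(\mc{P}(E))$ is exactly the power-object universal property at the parameter object $1$; the internal Heyting (indeed frame) structure on $\mc{P}(E)$ is transported to $\mc{G}(\mc{P}(E))$ because $\mathrm{Hom}(1,-)$ preserves finite limits and Heyting algebras are equationally axiomatized; and you correctly isolate the one substantive input, namely that the internal order on $\mc{P}(E)$ represents external inclusion of relations naturally in the parameter, specialized at $1$, which together with the fact that Heyting operations are order-determined upgrades the bijection to a Heyting isomorphism. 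Your fallback through $\mc{E}(Y)\simeq\mathrm{Sh}(Y)$, where $\mc{P}(F)(U)$ is the Heyting algebra of subsheaves of $F|_U$ so that global sections of the power object are literally the subsheaves of $F$, is arguably the cleanest route for this particular topos and meshes well with the paper's earlier identification of $\Sub(E)$ with $\mc{O}(E)$; either version is an acceptable substitute for the citation the paper relies on. The only caveats are cosmetic: fix a single convention for the membership relation (subobject of $\mc{P}(E)\times E$ versus $E\times\mc{P}(E)$), and when you invoke ``morphisms into $\le_E$ correspond to pairs of nested subobjects,'' state it as: a map $A\to\mc{P}(E)^2$, classifying relations $R,S\le E\times A$, factors through $\le_E$ iff $R\le S$ in $\Sub(E\times A)$ --- that is the precise form of the standard fact you need.
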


From general properties of power objects in a topos, if $R$ is an $n+1$-ary relation on an \'{e}tal\'{e} $E$, there is a morphism $h_R:\mc{P}(E)^n\to\mc{P}(E)$ called the relational image morphism. 

\begin{definition}
For a relational \'{e}tal\'{e} $\mc{E}=(E,\pi,(R_j)_J)$, let $\mc{E}^*=(\mc{P}(E),(h_j)_J)$. 
\end{definition}

A relational \'{e}tal\'{e} $\mc{E}$ is internally a relational structure in the topos, and $\mc{E}^*$ is internally its complex algebra. An external link to this internal creature is provided by global sections. 
For $s_1,\ldots,s_n:Y\to\mc{P}(E)$ global sections, let $(s_1,\ldots,s_n)$ be the product map. Then the composite $h_R\circ (s_1,\ldots,s_n):Y\to\mc{P}(E)$ is a global section. 

\begin{definition}
For $\mc{E}=(E,\pi,(R_j)_J)$ a relational \'{e}tal\'{e} of type $\tau$, let $\mc{G}(\mc{E}^*)$ be the the algebra of type $\tau$ consisting of the algebra $\mc{G}(\mc{P}(E))$ of global sections of the power object of $E$ and its derived  operations. 
\end{definition}

The following is not difficult from properties of the internal language, but its proof would take us too far afield. 

\begin{theorem}
For a relational structure $\mathfrak{X}$ and topological space $Y$ with lattice of open sets $L$, the convolution algebra $L^\mathfrak{X}$ is isomorphic to the algebra of global sections $\mc{G}(\hat{\mathfrak{X}}^*)$ of the internal complex algebra of the constant relational \'{e}tal\'{e} $\mathfrak{\hat{X}}$.  
\end{theorem}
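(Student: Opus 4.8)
The plan is to factor the claimed isomorphism as the composite
\[ L^{\mathfrak X}\ \xrightarrow{\;\Phi\;}\ \hat{\mathfrak X}^+\ \xrightarrow{\;\Psi\;}\ \mc{G}(\hat{\mathfrak X}^*), \]
where $\Phi\colon L^{\mathfrak X}\to\hat{\mathfrak X}^+$ is the isomorphism established earlier and $\Psi=\Psi_{\hat X}\colon\Sub(\hat X)\to\mc{G}(\mc{P}(\hat X))$ is the Heyting-algebra isomorphism recalled above. Since $\Phi$ already carries the Heyting structure of $L^{\mathfrak X}$ onto that of $\hat{\mathfrak X}^+$, and $\Psi$ is a Heyting-algebra isomorphism, the only remaining point is that $\Psi$ respects the extra operations: for each $j\in J$ and all $A_1,\dots,A_{n_j}\in\Sub(\hat X)$,
\[ \Psi\bigl(\hat R_j(A_1,\dots,A_{n_j})\bigr)\ =\ h_j\circ\bigl(\Psi A_1,\dots,\Psi A_{n_j}\bigr), \]
where the left side is the \'{e}tal\'{e} relational image of Definition~\ref{sd} and the right side is the operation that the relational-image morphism $h_j$ induces on global sections. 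So the substance of the theorem is the single identity: \emph{the external relational image of Definition~\ref{sd} is transported by $\Psi$ to the operation on global sections induced by the internal relational-image morphism}.

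To prove that identity I would first recall how $h_R\colon\mc{P}(E)^n\to\mc{P}(E)$ is built from a subobject $R\subseteq E^{n+1}$. Under the natural bijection $\mathrm{Hom}(C,\mc{P}(E))\cong\Sub(C\times E)$, the morphism $h_R$ is the one corresponding to the subobject $H_R\subseteq\mc{P}(E)^n\times E$ described \emph{generically} over the base $\mc{P}(E)^n$ by exactly the recipe of Definition~\ref{sd}: inside $\mc{P}(E)^n\times E^n\times E$ form the meet of the pullback $\mathrm{pr}^{*}R$ of $R$ with the $n$ subobjects ``$x_i\in S_i$'' gotten by pulling the membership relation $\in_E\subseteq\mc{P}(E)\times E$ back along the maps $(\vec S,\vec x,x)\mapsto(S_i,x_i)$, and then take the image of this meet along the projection $\mc{P}(E)^n\times E^n\times E\to\mc{P}(E)^n\times E$ that discards the middle $E^n$. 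That $h_R$ is characterised this way is the one genuinely topos-theoretic input (see e.g.\ \cite{Johnstone,Maclean}); it is just the statement that $h_R$ interprets the internal expression $\lambda\vec S.\{x\mid\exists x_1\in S_1\cdots\exists x_n\in S_n,\ (x_1,\dots,x_n,x)\in R\}$.

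Next I would substitute global sections. By the description of $\Psi_E^{-1}$, a global section $s\colon 1\to\mc{P}(E)$ corresponds to the subobject $\bar s\subseteq E$ gotten by pulling $\in_E$ back along $s\times\mathrm{id}_E$, so if $s_i=\Psi_E A_i$ then $\bar s_i=A_i$. Under the same natural bijection, $h_R\circ(s_1,\dots,s_n)$ corresponds to the pullback of $H_R$ along $(s_1,\dots,s_n)\times\mathrm{id}_E\colon 1\times E\to\mc{P}(E)^n\times E$. Now I use that the category of \'{e}tal\'{e} spaces over $Y$ is a topos, hence regular, so its (regular epi, mono) image factorisation is stable under pullback, while pullback of subobjects always preserves finite meets; thus pulling $H_R$ back commutes termwise with the recipe above. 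Since the pullback of ``$x_i\in S_i$'' along $(s_1,\dots,s_n)\times\mathrm{id}$ is the subobject of $E^{n+1}$ cut out by ``the $i$-th coordinate lies in $A_i$'', and the pullback of $\mathrm{pr}^{*}R$ is $R$ itself, the pullback of $H_R$ is the image along $E^{n+1}\to E$ of $(A_1\times\cdots\times A_n\times E)\cap R$, which is precisely $\hat R(A_1,\dots,A_n)$ in the sense of Definition~\ref{sd}. Unwinding $\Psi_E$ through this chain gives the displayed identity, and composing with $\Phi$ finishes the argument.

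The main obstacle is exactly the bookkeeping of the last two paragraphs: pinning down the generic-subobject description of $h_R$ precisely enough to quote, and then checking that each of its ingredients — pullback of the membership relation, finite meet, image along a projection — survives the base change along $(s_1,\dots,s_n)\times\mathrm{id}_E$, the image step being where regularity of the topos is used. A reader preferring a hands-on route can instead verify the displayed identity directly inside $\mc{E}(Y)$ by Kripke--Joyal forcing over the frame $L=\mc{O}(Y)$: unwinding, at an open set $W\subseteq Y$, what it means to force ``$x\in h_R(s_1,\dots,s_n)$'' reduces fibrewise to the stalkwise description of $\hat R(A_1,\dots,A_n)$ recorded just after Definition~\ref{sd}, i.e.\ to the formula $\alpha(x)=\bigcup\{\alpha_1(x_1)\cap\cdots\cap\alpha_n(x_n)\mid(x_1,\dots,x_n,x)\in R\}$ of Lemma~\ref{mm}. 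Either way the crux is identifying the internal relational-image morphism $h_R$ with the external relational image of Definition~\ref{sd}.
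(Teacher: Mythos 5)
Your proposal is correct, and it follows essentially the route the authors intend: the paper defers the proof (``its proof would take us too far afield''), and your key identity --- that the subobject classified by $h_R\circ(s_1,\dots,s_{n})$ is the external relational image of Definition~\ref{sd}, proved via the generic-subobject description of the relational-image morphism together with pullback-stability of image factorizations, or equivalently a Kripke--Joyal unwinding --- is exactly the question settled the same way in the authors' appended notes and correspondence. Composing that identity with the isomorphism $\Phi$ of the preceding theorem is precisely the intended factorization, so your argument in effect supplies the omitted proof.
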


\begin{corollary}
The type-2 truth fuzzy value algebra $\M$ is isomorphic to the \mbox{algebra} of global sections $\mc{G}(\hat{\mathfrak{I}}^*)$ of the internal complex algebra of the constant relational \'{e}tal\'{e} given by the relational structure $\mathfrak{I}=(\A,\wedge,\vee,\neg,0,1)$ in the topos of \'{e}tal\'{e} spaces over $Y=[0,1)$ with the lower topology. 
\end{corollary}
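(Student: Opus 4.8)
The plan is to obtain the statement by composing two isomorphisms that are already at our disposal, so the proof is essentially bookkeeping. First I would recall H\"ohle's observation recorded earlier in the excerpt: for $Y=[0,1)$ with the lower topology, the frame $\mc{O}(Y)$ of open sets is precisely the chain of initial segments $[0,\lambda)$ with $0\le\lambda\le 1$, and the map $[0,\lambda)\mapsto\lambda$ is an isomorphism of complete Heyting algebras onto $\A=[0,1]$. Under this isomorphism arbitrary unions correspond to arbitrary joins and finite intersections to finite meets.

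Next I would invoke Corollary~\ref{fg}, which already identifies the type-2 fuzzy truth value algebra $\M$ of Definition~\ref{type2} with the convolution algebra $L^{\mathfrak{I}}$, where $L=\mc{O}(Y)$ and $\mathfrak{I}=(\A,\wedge,\vee,\neg,0,1)$ is regarded as a relational structure. The point worth a sentence of care is that the isomorphism $\mc{O}(Y)\cong\A$ of the previous paragraph, which a priori is only an isomorphism of the underlying Heyting algebras, automatically upgrades to an isomorphism of convolution algebras: the operations $f_j$ of Definition~\ref{aa} are defined purely from arbitrary joins, finite meets, and the fixed relations $R_j$ living on the index set $X=\A$ — and the relations on the index set are untouched when one changes the coefficient lattice — so an isomorphism preserving joins and meets preserves all the $f_j$ as well. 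This is exactly the content behind Corollary~\ref{fg}.

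Then I would apply the preceding theorem with the relational structure taken to be $\mathfrak{X}=\mathfrak{I}$ and the base space taken to be $Y=[0,1)$ with the lower topology. That theorem supplies an isomorphism $L^{\mathfrak{I}}\cong\mc{G}(\hat{\mathfrak{I}}^*)$ between the convolution algebra and the algebra of global sections of the internal complex algebra of the constant relational \'{e}tal\'{e} $\hat{\mathfrak{I}}$ in $\mc{E}(Y)$. Composing the two isomorphisms gives $\M\cong L^{\mathfrak{I}}\cong\mc{G}(\hat{\mathfrak{I}}^*)$, which is the assertion.

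I do not expect a genuine obstacle here, since every ingredient is in place and the argument is a two-step composition; the work has been done in Corollary~\ref{fg} and in the preceding theorem. The only step requiring even momentary attention is confirming, as noted above, that the frame isomorphism $\mc{O}(Y)\cong\A$ is compatible with the convolution operations and not merely with the Heyting algebra structure — but this is immediate from the way those operations are defined.
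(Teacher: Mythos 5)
Your proposal is correct and is exactly how the paper intends the corollary to be read: it is the immediate composition of Corollary~\ref{fg} (which rests on H\"ohle's identification $\mc{O}(Y)\cong\A$ and its compatibility with the convolution operations) with the preceding theorem specialized to $\mathfrak{X}=\mathfrak{I}$ and $Y=[0,1)$. No further argument is needed.
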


We have seen {\em how} the type-2 fuzzy truth value algebra $\M$ arises in the topos used to represent fuzzy sets. The question remains as to {\em why}. Here we can offer some comments, but do not have a fully satisfactory explanation.

The topos of \'{e}tal\'{e} spaces over $Y$ has an internal Dedekind real unit interval, and therefore an internal type-1 fuzzy truth value algebra. However, this is not given by the constant relational \'{e}tal\'{e} $\hat{\mathfrak{I}}$ \cite[p.~327]{Maclean}. Closely related is the {\em rational type-1 truth value algebra} $\mathfrak{I}_\mathbb{Q}$ obtained by restricting the real unit interval and its operations to the rationals. Its internal version is the constant relational \'{e}tal\'{e} $\hat{\mathfrak{I}}_\mathbb{Q}$. 

The {\em rational type-2 fuzzy truth value algebra} $\M_{\mathbb{Q}}$ is defined as is $\M$, but using functions $\alpha:[0,1]_\mathbb{Q}\to [0,1]$ rather than functions $\alpha:[0,1]\to[0,1]$. Our results show it is isomorphic to the algebra of global sections of the complex algebra $\hat{\mathfrak{I}}_{\mathbb{Q}}^*$ of the internal rational type-1 truth value algebra. The idea of taking the complex algebra of an algebra of truth values and using this for truth values of aggregates is not new (see \cite{hyperboolean} and the references therin). 

This line of reasoning may give a partial heuristic as to the occurrence of the type-2 fuzzy truth value algebra $\M$ as the global sections of the internal complex algebra $\hat{\mathfrak{I}}^*$, but points to new questions as well. 
\vspace{2ex}

We thank Jeff Egger and Mamuka Jibladze for several conversations in preparing this manuscript.

\end{document}